\theoremstyle{definition}
\newtheorem{definition}{Definition}%
\theoremstyle{plain}
\newtheorem{theorem}[definition]{Theorem}
\newtheorem{lemma}[definition]{Lemma}
\newtheorem{proposition}[definition]{Proposition}
\newtheorem{problem}{Problem}
\newtheorem{corollary}[definition]{Corollary}
\newtheorem{remark}[definition]{Remark}
\title{Simplicity conditions for binary orthogonal arrays}
\author{Claude Carlet}
\address{Universities of Paris 8 and Paris 13, CNRS LAGA (UMR 7539)\\
	Dept of Math. Univ. Paris 8\\
	2 rue de la Liberté\\
	F-93 526 Saint-Denis Cedex, France}
\address{Department of Informatics\\
	University of Bergen\\
	PB 7803, N-5020 Bergen, Norway}
\email{claude.carlet@gmail.com}
\author{Rebeka Kiss}
\address{Bolyai Institute \\
	University of Szeged \\
	Aradi v\'ertan\'uk tere 1\\
	H-6720 Szeged, Hungary}
\email{Kiss.Rebeka@stud.u-szeged.hu}
\author{G\'abor P. Nagy}
\address{Department of Algebra \\
	Budapest University of Technology and Economics\\
	M\H{u}egyetem rkp 3\\
	H-1111 Budapest, Hungary}
\address{Bolyai Institute \\
	University of Szeged \\
	Aradi v\'ertan\'uk tere 1\\
	H-6720 Szeged, Hungary}
\email{nagy.gabor.peter@ttk.bme.hu}
\thanks{The research of C. Carlet is partly supported by the Trond Mohn Foundation and Norwegian Research Council. For R. Kiss and G.P. Nagy, support has been provided from the National Research, Development and Innovation Fund of Hungary, financed under the 2018-1.2.1-NKP funding scheme, within the SETIT Project 2018-1.2.1-NKP-2018-00004. The research of G.P. Nagy is partly supported by NKFIH-OTKA Grant SNN 132625. }
\keywords{Orthogonal array; Correlation-immune Boolean function; Rao's Bound; Linear Programming Bound}
\subjclass[2010]{05B05}
\begin{document}

\begin{abstract}
It is known that correlation-immune (CI) Boolean functions used in the framework of side channel attacks need to have low Hamming weights. The supports of CI functions are (equivalently) simple orthogonal arrays, when their elements are written as rows of an array. The minimum Hamming weight of a CI function is then the same as the minimum number of rows in a simple orthogonal array. In this paper, we use Rao's Bound to give a sufficient condition on the number of rows, for a binary orthogonal array (OA) to be simple. We apply this result for determining the minimum number of rows in all simple binary orthogonal arrays of strengths 2 and 3;  we show that this minimum is the same in such case as for all OA, and we extend this observation to some OA of strengths $4$ and $5$. This allows us to reply positively, in the case of strengths 2 and 3, to a question raised by the first author and X. Chen on the monotonicity of the minimum Hamming weight of 2-CI Boolean functions, and to partially reply positively to the same question in the case of strengths 4 and 5.
\end{abstract}

\maketitle

\section{Introduction} \label{sec:intro}

In cryptography, {\em correlation immune} (CI) functions are those Boolean functions over $\mathbb{F}_2^k$ whose output distribution does not change when at most $t$ input bits are fixed, where $t\leq k$ is the correlation immunity order, whatever is the choice of these input bits and whatever are the values to which they are fixed. As shown in \cite{xiaomassey1988}, they are those $k$-variable Boolean functions whose Fourier transform $\widehat{f}(a)=\sum_{x\in \mathbb{F}_2^k}f(x)(-1)^{a\cdot x}$ (where ``$\cdot$" is the usual inner product in $\mathbb{F}_2^k$) vanishes for all nonzero inputs $a\in \mathbb{F}_2^k$ of Hamming weight at most $t$. In other words, the supports of these functions are unrestricted  (i.e. linear or nonlinear) binary codes of dual distance at least $t+1$. The correlation immunity of a function $f$ allows the resistance against the {\em Siegenthaler correlation attack} on the stream ciphers using $f$ as a combining function (see \cite{carlet2021boolean} for more details). CI functions can also be used for implementing the {\em rotating S-box masking} counter-measure against side channel attacks (see \cite{carlet2021boolean} as well). We can reduce the cost of this counter-measure by finding, for given $k$ and $t$, the minimum Hamming weight $w_{k,t}$ of $t$-th order CI-functions in $k$ variables, that is the minimal size of their supports, and then by using a CI function of such weight in the implementation. 
 The first author and Guilley \cite{MR3784600,MR3287682} published a table containing the values of $w_{k,t}$ for small $k,t$. It is difficult to give these values even for small parameters, this is demonstrated by the facts that the table is limited to $k\leq 13$ and even then, there are missing values in the table. 

CI-functions are closely related to orthogonal arrays, introduced by C.R. Rao \cite{Rao1947} in 1947. Let $N,t,k$ be positive integers, $t\leq k$, and $S$ a finite set of cardinality $s$. An $N\times k$ array $A$ with entries from $S$ is said to be an \textit{orthogonal array with $s$ symbols, strength $t$, and index $\lambda$,} if every $N\times t$ subarray of $A$ contains each $t$-tuple based on $S$ exactly $\lambda$ times as a row. We will denote such an array by $\mathit{OA}(N,k,s,t)$. We have $\lambda = N/s^t$. An orthogonal array is called \textit{simple} if the rows are distinct. Supports of $t$-th order CI-functions give simple binary orthogonal arrays with strength $t$, if their elements are written as rows, and vice versa. 

In the theory of orthogonal arrays, for both simple and general orthogonal arrays, the main question is to give -- for given numbers $k$ of columns and $s$ of symbols, and for strength $t$ -- the minimum value of $N$ for which an orthogonal array $\mathit{OA}(N,k,s,t)$ exists with $N$ rows. We will denote this value by $F^*(k,s,t)$ for simple orthogonal arrays (we have then $w_{k,t}=F^*(k,2,t)$) and by $F(k,s,t)$ for general orthogonal arrays. This problem is very hard even for the smallest parameters $s=t=2$. In fact, a binary orthogonal array of strength $2$ with $k$ columns and $k+1$ rows is equivalent to a Hadamard matrix of order $k+1$. A \textit{Hadamard matrix} of order $n$ is an $n\times n$ matrix whose entries are either $+1$ or $-1$, and whose rows are mutually orthogonal. The famous Hadamard conjecture proposes that a Hadamard matrix of order $n$ exists if and only if $n$ is divisible by $4$. Equivalently in our notation: $F(k,2,2)=k+1$ if and only if $k$ is congruent to $3$ modulo $4$.

For some lower bounds on the number $N$ of rows, it is known that if an $\mathit{OA}(N,k,s,t)$  attains this special bound, then it is simple. For example, this is true for
the Friedman-Bierbrauer bound \cite{Bierbrauer1995}  
\[N\geq s^k\left(1-\frac{(s-1)k}{s(t+1)}\right).\]
Indeed, it is seen from the proof that any multiplicity greater than $1$ makes the inequality strict. For binary orthogonal arrays of strength $t\geq (2k-2)/3$, the bound $N\geq 2^{k-1}$ implies simplicity in the case of equality, see \cite{Khalyavin2010}. 

In \cite{MR3287682}, the first author and Guilley asked the the following question:
\begin{problem}[Carlet-Guilley] \label{prob:CG}
Is $F^*(k,2,t)$ a monotone non-decreasing function when $k$ grows and $t$ remains fixed?
\end{problem}
The same question for $F(k,s,t)$ is trivial, since an $\mathit{OA}(N,k,s,t)$ gives rise to an $\mathit{OA}(N,k-1,s,t)$ by deleting one of the columns. Moreover, if $F(k,s,t)=F^*(k,s,t)$, then
\[F^*(k,s,t)\leq F(k+1,s,t) \leq F^*(k+1,s,t).\]
Hence, the solution of the following problem would imply an answer to the problem posed by the first author and Guilley:
\begin{problem} \label{prob:new}
Find all parameters $k,s,t$ such that $F(k,s,t)=F^*(k,s,t)$.
\end{problem}
In this paper, we give a partial answer to Problem \ref{prob:new}. Our main theoretical result is the following:
\begin{theorem} \label{thm:main}
Let $A$ be an $\mathit{OA}(N,k,s,2u)$. Define the integer
\[M(k,s,2u)=\sum_{j=0}^u \binom{k}{j}(s-1)^j.\]
\begin{enumerate}[(i)]
\item If $A$ has a row of multiplicity $\rho$, then $N\geq \rho\, M(k,s,2u)$.
\item If $N<2\,M(k,s,2u)$, then $A$ is simple. If $N<3\,M(k,s,2u)$, then each row of $A$ has multiplicity at most $2$.
\item If $k\geq 5$, $s=2$, $u=2$ and 
\[N=2\,M(k,2,4)=k^2+k+2, \]
then either $A$ is simple, or $k=5$ and $A$ is obtained by the juxtaposition of two identical arrays $\mathit{OA}(16,5,2,4)$. 
\end{enumerate} 
\end{theorem}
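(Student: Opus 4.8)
The plan is to combine the Rao-type bound from part (i) with a refined analysis of what happens at the boundary $N = 2M(k,2,4)$. First I would recall that part (i), applied with $\rho$ the maximum row multiplicity of $A$, forces $\rho M(k,2,4) \le N = 2M(k,2,4)$, so $\rho \le 2$; hence $A$ is a disjoint union of a simple $\mathit{OA}$ and a ``doubled'' part consisting of rows each occurring exactly twice. Write $A$ as the juxtaposition of the multiset $A_1$ of rows of multiplicity $1$ and the multiset $A_2$ of rows of multiplicity $2$, say $A_2$ consists of two copies of a simple array $B$ on $m$ distinct points, so $N = |A_1| + 2m$ with $A_1, B$ disjoint as point sets. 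The goal is to show $A_1 = \emptyset$ (so $A$ is simple) unless $k = 5$ and $B$ itself is an $\mathit{OA}(16,5,2,4)$ with $A_1$ empty.

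The key step is to understand $B$. Since $A$ has strength $4$ and index $\lambda = N/16 = (k^2+k+2)/8$, I would examine the subarrays on small column subsets and try to force $B$ to carry a lot of structure. The natural idea: the doubled part $A_2$ contributes evenly to every $t$-tuple count for $t \le 4$, but the interaction between $A_1$ and $B$ is constrained because $|A_1| = N - 2m$ is small when $m$ is large and large when $m$ is small. Equality in the Rao bound is the crucial pressure point: if some row has multiplicity exactly $2$, then tracing through the proof of part (i) should show the inequality in (i) is tight for $\rho = 2$ only in very rigid circumstances — essentially, the set of ``directions'' (characters of weight $\le u = 2$) must be linearly dependent on the doubled configuration in a way that pins down $m$. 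I expect this to yield $2m \ge M(k,2,4)$, i.e. $2m \ge \tfrac{k^2+k+2}{2}$, and simultaneously $|A_1| = N - 2m \le \tfrac{k^2+k+2}{2}$; but $A_1$, being a simple $\mathit{OA}$ of strength... here is the subtlety: $A_1$ by itself need not be an orthogonal array. So instead I would look at $A_1$ together with a single copy of $B$, call it $A' = A_1 \cup B$, which is simple with $|A'| = N - m$ rows, and argue that $A'$ must itself be an $\mathit{OA}(N-m, k, 2, t')$ for some strength $t'$ — this follows because $A = A' \sqcup B$ and $A$, $B$ have even/controlled contributions; more precisely $A'$ is an $\mathit{OA}$ of strength $t$ iff $B$ is, by subtracting counts. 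So $B$ is an $\mathit{OA}(m,k,2,t'')$ for some strength $t''$, and since $B \subseteq A$ with $A$ of strength $4$, a counting argument on $4$-subsets forces $B$ to have strength $4$ as well (or be empty).

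Now I would invoke part (i) for $B$: an $\mathit{OA}(m,k,2,4)$ has $m \ge M(k,2,4) = k^2+k+2$ ... wait, that would give $2m \ge 2(k^2+k+2) = N$, forcing $A_1 = \emptyset$ and $A = A_2$ a doubled $\mathit{OA}(N/2, k, 2, 4)$, with $N/2 = (k^2+k+2)/2 = M(k,2,4)$; but $M(k,2,4)$ is the Rao bound, so $B$ meets it with equality, and for $B$ itself part (i) (tightness) should force $B$ simple — consistent — and then we ask when an $\mathit{OA}$ meeting Rao's bound for strength $4$, binary, with $k \ge 5$ columns exists. A classical fact (Rao-tight strength-$4$ binary arrays are tightly related to the Reed–Muller code $\mathcal{R}(2,k)$ / its dual; they are ``tight'' designs) is that such arrays exist only for very small $k$; in particular for $k = 5$ one gets the $\mathit{OA}(16,5,2,4)$ (the $[5,?]$ Reed–Muller configuration, indeed $16 = M(5,2,4)/2$... let me recompute: $M(5,2,4) = \binom{5}{0}+\binom{5}{1}+\binom{5}{2} = 1+5+10 = 16$). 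Good: $M(5,2,4) = 16$, so for $k=5$, $N = 2\cdot 16 = 32 = 5^2+5+2$, and $B$ is an $\mathit{OA}(16,5,2,4)$, recovering exactly the stated exceptional case. For $k \ge 6$, $M(k,2,4) = k^2+k+2$... no wait, $M(k,2,4) = 1 + k + \binom k2 = (k^2+k+2)/2$, which equals $N/2$; so $B$ is Rao-tight of strength $4$, and one must show no such binary array exists for $k \ge 6$ — this is where I would cite the known classification of tight orthogonal arrays / the non-existence results (tight $4$-designs), the Fisher-type inequality for strength-$4$ arrays being met only for the trivial and the $k=5$ case.

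The main obstacle I anticipate is \textbf{controlling the ``mixed'' case cleanly}: showing that whenever there is a row of multiplicity $2$ the whole doubled part $B$ must be a strength-$4$ orthogonal array on its own, rather than just some arbitrary subconfiguration. The subtraction argument ($A = A' \sqcup B$, $A$ of strength $4$) only directly shows $A'$ and $B$ have \emph{the same} deviation from being an $\mathit{OA}$; pinning that common deviation to zero requires an extra input — most likely the precise equality analysis in the proof of (i), i.e. that $\rho = 2$ can occur only when $B$ already saturates its own Rao bound, which is exactly the rigidity that forces $B$ to be the unique Rao-tight array. Handling $k=5$ as a genuine exception (rather than a contradiction) and verifying the doubled $\mathit{OA}(16,5,2,4)$ really does have strength $4$ with the right index $\lambda = 32/16 = 2$ is then a small explicit check.
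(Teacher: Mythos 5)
Your outline for part (iii) contains a genuine gap at exactly the step you yourself flag as the ``main obstacle'': you never establish that the doubled part $B$ is a strength-$4$ orthogonal array in its own right. The subtraction argument only shows that $A_1\cup B$ and $B$ have the same deviation from being an OA, and nothing you write pins that common deviation to zero. Without this, you cannot apply Rao's bound to $B$, cannot conclude $2m\geq N$, and cannot reach the classification of Rao-tight strength-$4$ arrays (which, incidentally, is a nontrivial theorem of Noda, not a casual ``classical fact''). In addition, the theorem's statement includes part (i), which you use but never prove; it is not an off-the-shelf citation but the paper's main technical lemma, obtained by extending the Rao matrix $\frac{1}{\sqrt{N}}H$ (columns indexed by weight-$\leq u$ tuples $v$, entries $\zeta^{a_iv^T}$) to a unitary matrix $Q$: equal rows of $A$ give rows $[u\,u_{(r)}]$ of $Q$ sharing the block $u$ with $\|u\|^2=M/N$, and if $N<\rho M$ the pairwise inner products $u_{(r)}u_{(s)}^*=-M/N<-1/\rho$ force $\bigl\|\sum_r u_{(r)}\bigr\|^2<1-\rho M/N<0$, a contradiction.

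The paper's proof of (iii) avoids your decomposition entirely and is self-contained. After translating so that the repeated row is all-zero, the equality $N=2M(k,2,4)$ forces (via the same matrix $Q$) the two corresponding rows of $Q$ to be $[u\,u']$ and $[u\,-u']$, so every other row $[v\,v']$ satisfies $uv^T=0$; hence every other row of $H$ contains exactly $N/4$ ones, which translates into the quadratic $\ell^2-(k+1)\ell+(k^2+k+2)/4=0$ for the weight $\ell$ of each such row of $A$. Thus every non-repeated row has weight $(k+1\pm\sqrt{k-1})/2$, so $\kappa=\sqrt{k-1}$ is an integer; counting the ones in the subarray of rows beginning with three zeros yields $16a\kappa=\kappa^5-4\kappa^4+3\kappa^3+4\kappa^2-12\kappa+16$, whence $\kappa\mid 16$, and a congruence modulo $16$ eliminates $\kappa\in\{4,8,16\}$, leaving $\kappa=2$, i.e.\ $k=5$. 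There a pigeonhole count ($30$ nonzero rows of weight $2$ or $4$ among only $\binom{5}{2}+\binom{5}{4}=15$ candidates) shows every nonzero row has multiplicity $2$, so $A$ is a doubled $\mathit{OA}(16,5,2,4)$. To salvage your approach you would need either to close the ``$B$ is an OA'' step (which seems as hard as the theorem itself) or to switch to a weight-spectrum argument of this kind.
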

Part (ii) of Theorem \ref{thm:main} implies a sufficient condition for the parameters $k,s,t$ to fulfill Problem \ref{prob:new}:
\begin{corollary} \label{cor:1}
If $t$ is even and
\[F(k,s,t) < 2\, M(k,s,t)\]
then
\[F^*(k,s,t)=F(k,s,t). \qed\]
\end{corollary}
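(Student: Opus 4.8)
The plan is to obtain the corollary as an immediate consequence of part (ii) of Theorem \ref{thm:main}. Since $t$ is even, write $t=2u$, so that $M(k,s,t)=M(k,s,2u)$. By the definition of $F(k,s,t)$ there is an orthogonal array $A=\mathit{OA}(N,k,s,t)$ with exactly $N=F(k,s,t)$ rows. The hypothesis $F(k,s,t)<2\,M(k,s,t)$ says precisely that $N<2\,M(k,s,2u)$, so Theorem \ref{thm:main}(ii) applies to $A$ and yields that $A$ is simple.

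Next I would read off the two inequalities. On the one hand, $A$ is now a simple $\mathit{OA}(N,k,s,t)$ with $N=F(k,s,t)$ rows, and since $F^*(k,s,t)$ is by definition the least possible number of rows of a simple orthogonal array with these parameters, this gives $F^*(k,s,t)\leq F(k,s,t)$. On the other hand, every simple orthogonal array is in particular an orthogonal array, so the minimum row count taken over the smaller class of simple arrays cannot fall below the minimum taken over the class of all arrays; hence $F^*(k,s,t)\geq F(k,s,t)$ holds with no hypothesis at all. Combining the two gives $F^*(k,s,t)=F(k,s,t)$.

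There is essentially no obstacle in this argument: all the substance is already contained in Theorem \ref{thm:main}(ii), and the corollary is a short unwinding of the definitions of $F$ and $F^*$ once that theorem is available. The only point deserving a word of care is the direction $F^*(k,s,t)\geq F(k,s,t)$, which must be noted to hold unconditionally, so that the assumption $F(k,s,t)<2\,M(k,s,t)$ is used only to force the minimal array to be simple and thereby secure the reverse inequality.
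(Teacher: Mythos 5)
Your argument is correct and is exactly the unwinding the paper intends: the corollary is stated with a \qed precisely because a minimal $\mathit{OA}(F(k,s,t),k,s,t)$ is forced to be simple by Theorem \ref{thm:main}(ii) under the hypothesis, giving $F^*\leq F$, while $F\leq F^*$ holds trivially. No difference in approach and no gaps.
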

Notice that the integer $M(k,s,t)$ is the lower bound for the number of rows in an orthogonal array with $k$ columns, $s$ symbols and strength $t$, given in Rao's famous theorem \cite[Theorem 2.1]{MR1693498}:
\begin{align}
F(k,s,t)\geq M(k,s,t) \qquad \text{for all positive integers $k,s,t$.}
\end{align} 
For part (iii) of Theorem \ref{thm:main}, we observe that $M(5,2,4)=16$, and up to equivalence, there is a unique $\mathit{OA}(16,5,2,4)$. If we assume that such an array has an all-$0$ row, then all its rows have an even number of $1$s.

\begin{table}[]\footnotesize
	\caption{Number of rows in minimal simple orthogonal arrays with given number of columns and given strength}
	\label{tab:minrows}
	\begin{tabular}{c||c|cc|cc|cc|cc|cc|cc|}
		$k\backslash t$   & \textbf{1} & \textbf{2} & \textbf{3} & \textbf{4} & \textbf{5}  & \textbf{6} & \textbf{7}  & \textbf{8} & \textbf{9} & \textbf{10} & \textbf{11} & \textbf{12} & \textbf{13} \\ \hline\hline
		\textbf{1}  & \cellcolor{black!15}2  &&&&&&&&&&&&   \\ \hline
		\textbf{2}  & \cellcolor{black!15}2  & \cellcolor{black!15}4  &&&&&&&&&&&   \\ \hline
		\textbf{3}  & \cellcolor{black!15}2  & \cellcolor{black!40}4  & \cellcolor{black!15}8  &&&&&&&&&&   \\ \hline
		\textbf{4}  & \cellcolor{black!15}2  & \cellcolor{black!40}8  & \cellcolor{black!40}8  & \cellcolor{black!15}16  &&&&&&&&&   \\ \hline
		\textbf{5}  & \cellcolor{black!15}2  & \cellcolor{yellow!20}8  & \cellcolor{black!40}16  & \cellcolor{black!40}16  & \cellcolor{black!15}32  &&&&&&&&   \\ \hline
		\textbf{6}  & \cellcolor{black!15}2  & \cellcolor{yellow!20}8  & \cellcolor{yellow!20}16  & \cellcolor{black!40}32  & \cellcolor{black!40}32  & \cellcolor{black!15}64  &&&&&&&   \\ \hline
		\textbf{7}  & \cellcolor{black!15}2  & \cellcolor{yellow!20}8  & \cellcolor{yellow!20}16  & \cellcolor{black!40}64  & \cellcolor{black!40}64  & \cellcolor{black!40}64  & \cellcolor{black!15}128  &&&&&&   \\ \hline
		\textbf{8}  & \cellcolor{black!15}2  & \cellcolor{yellow!20}12  & \cellcolor{yellow!20}16  & \cellcolor{green!20}64  & \cellcolor{black!40}128  & \cellcolor{black!40}128  & \cellcolor{black!40}128  & \cellcolor{black!15}256  &&&&&   \\ \hline
		\textbf{9}  & \cellcolor{black!15}2  & \cellcolor{yellow!20}12  & \cellcolor{yellow!20}24  & \cellcolor{magenta!30}128  & \cellcolor{green!20}128  & \cellcolor{black!40}256  & \cellcolor{black!40}256  & \cellcolor{black!40}256  & \cellcolor{black!15}512  &&&&   \\ \hline
		\textbf{10} & \cellcolor{black!15}2  & \cellcolor{yellow!20}12  & \cellcolor{yellow!20}24  & \cellcolor{magenta!30}128  & \cellcolor{magenta!30}256  & \cellcolor{black!40}512  & \cellcolor{black!40}512  & \cellcolor{black!40}512  & \cellcolor{black!40}512  & \cellcolor{black!15}$1\,024$  &&&   \\ \hline
		\textbf{11} & \cellcolor{black!15}2  & \cellcolor{yellow!20}12  & \cellcolor{yellow!20}24  & \textbf{A} & \textbf{A'} & \cellcolor{green!20}512  & \cellcolor{black!40}$1\,024$  & \cellcolor{black!40}$1\,024$  & \cellcolor{black!40}$1\,024$  & \cellcolor{black!40}$1\,024$  & \cellcolor{black!15}$2\,048$  &&   \\ \hline
		\textbf{12} & \cellcolor{black!15}2  & \cellcolor{yellow!20}16  & \cellcolor{yellow!20}24  & \textbf{A} & \textbf{A'} & \textbf{B} & \cellcolor{green!20}$1\,024$  & \cellcolor{black!40}$2\,048$  & \cellcolor{black!40}$2\,048$  & \cellcolor{black!40}$2\,048$  & \cellcolor{black!40}$2\,048$  & \cellcolor{black!15}$4\,096$  &   \\ \hline
		\textbf{13} & \cellcolor{black!15}2  & \cellcolor{yellow!20}16  & \cellcolor{yellow!20}32  & \textbf{A} & \textbf{A'} & \textbf{C} & \textbf{B'} & \cellcolor{black!40}$4\,096$  & \cellcolor{black!40}$4\,096$  & \cellcolor{black!40}$4\,096$  & \cellcolor{black!40}$4\,096$  & \cellcolor{black!40}$4\,096$  & \cellcolor{black!15}$8\,192$        \\ \hline
	\end{tabular}
\end{table}
We conclude this section with Table \ref{tab:minrows}, which shows the values of $F^*(k,2,t)$ for $1\leq k,t \leq 13$; it is a reproduction of the tables in \cite{MR3784600,MR3287682,MR4001794}. Using old and new computational results, and Theorem \ref{thm:main}, we were able to fill in new entries in Table \ref{tab:minrows}, denoted by capital letters. For previously known entries we colored the cells; the meaning of the colors are explained below.
\begin{description}
\item[gray] The light gray fields are trivial. The dark gray fields are consequences of the Fon-Der-Flaass Theorem \cite{MR2465419}. 
\item[yellow] The yellow fields are related to the constructions of Hadamard matrices, to the famous Hadamard Conjecture, and to a recent conjecture by the first author and Chen, see section \ref{sec:strength2_4} for details. 
\item[green] The values equal to Delsarte's LP Bound, and the construction is given by a linear code of codimension $2$, see \cite{MR3784600,MR3287682,MR4001794}. 
\item[red] The first author and Guilley \cite{MR3287682} contributed the values by using the Satisfiability Modulo Theory (SMT) tool \texttt{z3} \cite{z32008}. The upper bound follows from a well-known construction that is related to shortening of the non-linear binary Kerdock code of length $16$, see \cite{KERDOCK1972182}. 
\item[A, A'] $A=128$ and $A'=256$, see Proposition \ref{pr:table}(A). 
\item[B, B'] $B=768$ and $B'=1\,536$. The values equal to Delsarte's LP Bound. The existence and uniqueness of an $\mathit{OA}(1\,536,13,2,7)$ has been shown recently by Krotov \cite{Krotov2020}. See Proposition \ref{pr:table}(B) for an independent construction. 
\item[C] $C=1\,024$, see Proposition \ref{pr:table}(C) and \cite{MR4001794}. 
\end{description}

\section{Preliminary results}

In this section, we collected some preliminary results and notation on the minimum number of rows of an orthogonal array with $k$ rows, $s$ symbols and strength $t$. Recall the definition
\begin{align*}
F(k,s,t) &= \min \{ N \mid \exists \,\mathit{OA}(N,k,s,t)\}, \\
F^*(k,s,t) &= \min \{ N \mid \exists \text{ simple } \mathit{OA}(N,k,s,t)\}. \\
\end{align*}

\begin{lemma}
\begin{align}
F(k,s,t) &\leq F^*(k,s,t), \label{eq:FeqF*}\\
F(k,s,t) &\leq F(k+1,s,t), \label{eq:Fmonotone} \\
2\,F(k,2,2u) &= F(k+1,2,2u+1), \label{eq:2F} \\
2\,F^*(k,2,2u) &= F^*(k+1,2,2u+1). \label{eq:double}
\end{align}
\end{lemma}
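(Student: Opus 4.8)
The plan is to verify the four relations in order, each by an elementary combinatorial argument. Relation \eqref{eq:FeqF*} is immediate from the definitions: a simple $\mathit{OA}(N,k,s,t)$ is in particular an $\mathit{OA}(N,k,s,t)$, so the minimum over the smaller class can only be larger. Relation \eqref{eq:Fmonotone} is the column-deletion observation already made in the introduction: given an $\mathit{OA}(N,k+1,s,t)$, striking any one column yields an $\mathit{OA}(N,k,s,t)$ (every $t$-subset of the remaining columns was already a $t$-subset of the original, hence still balanced), so $F(k,s,t)\le F(k+1,s,t)$. These two cost essentially nothing.

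The substance is in \eqref{eq:2F}, and \eqref{eq:double} will follow by the same construction together with a simplicity bookkeeping step. For \eqref{eq:2F} I would argue both inequalities. For ``$\le$'': take an $\mathit{OA}(N,k,2,2u)$ with $N=F(k,2,2u)$ and form the $2N\times(k+1)$ array obtained by stacking the array with a new all-$0$ column on top of the array with a new all-$1$ column (equivalently, append a parity-type coordinate). One checks that for any $(2u+1)$-subset of columns the induced subarray is balanced: if the subset avoids the new column this is clear from strength $2u\ge 2u+1-1$ after restricting to either half; if it contains the new column, then fixing the new coordinate to $0$ or $1$ selects one of the two halves and we use strength $2u$ on the remaining $2u$ old columns. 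Hence $F(k+1,2,2u+1)\le 2N$. For ``$\ge$'': start from an $\mathit{OA}(N',k+1,2,2u+1)$ with $N'=F(k+1,2,2u+1)$; partitioning its rows according to the value in the last column gives two arrays whose union has $N'$ rows, and each part, on the first $k$ columns, is an $\mathit{OA}$ of strength $2u$ (fixing the last coordinate is exactly the definition of strength dropping by one), so one of them has at most $N'/2$ rows, giving $F(k,2,2u)\le N'/2$, i.e. $N'\ge 2F(k,2,2u)$. Combining yields \eqref{eq:2F}.

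For \eqref{eq:double} the same two constructions are used, but one must track distinctness of rows. In the ``$\le$'' direction, if the starting $\mathit{OA}(N,k,2,2u)$ is simple then the stacked array of \eqref{eq:2F} is automatically simple, because two rows in the same half differ already in the first $k$ coordinates, and two rows in different halves differ in the new last coordinate; hence $F^*(k+1,2,2u+1)\le 2F^*(k,2,2u)$. In the ``$\ge$'' direction, if $A'$ is a simple $\mathit{OA}(N',k+1,2,2u+1)$ with $N'=F^*(k+1,2,2u+1)$, then each of the two halves (rows with last coordinate $0$, resp.\ $1$), read on the first $k$ columns, is again simple — distinct rows of $A'$ sharing the last coordinate must differ in the first $k$ — so the smaller half is a simple $\mathit{OA}$ of strength $2u$ with at most $N'/2$ rows, giving $F^*(k,2,2u)\le N'/2$. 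Together these give \eqref{eq:double}.

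The only real obstacle is the ``strength drops by one when you fix a coordinate'' claim used in both directions of \eqref{eq:2F} and \eqref{eq:double}; it is routine but worth stating cleanly. If $B$ is an $\mathit{OA}(N,k+1,s,t)$ and we fix the last column to a value $c$, the resulting subarray $B_c$ on the first $k$ columns has the property that every $t-1$ of those columns, together with the fixed last column, formed a $t$-subset of $B$ that was balanced; restricting to the rows with last entry $c$ leaves each $(t-1)$-tuple appearing $\lambda = N/s^t$ times, so $B_c$ is an $\mathit{OA}(N/s,k,s,t-1)$ — in particular its number of rows is $N/s$, which for $s=2$ is exactly $N/2$, and one of the two halves realises this. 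Everything else is definition-chasing; no deep input beyond Rao's bound (which is not even needed here) is required.
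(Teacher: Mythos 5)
Most of your write-up is a correct, self-contained version of what the paper merely cites: \eqref{eq:FeqF*} and \eqref{eq:Fmonotone} are as you say, and your ``$\geq$'' halves of \eqref{eq:2F} and \eqref{eq:double} (split the rows by the value of the last coordinate; each part has exactly $N'/2$ rows since a single column of an array of strength $\geq 1$ is balanced, each part has strength $2u$ on the first $k$ columns, and each part inherits simplicity because distinct rows agreeing in the last coordinate must differ among the first $k$) are sound. The paper itself just invokes \cite[Theorem 2.24, Corollary 2.25]{MR1693498} and \cite[Proposition 2.6]{MR3784600} here, so a direct argument is welcome.

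However, the ``$\leq$'' direction of \eqref{eq:2F} contains a genuine error: the array $\left[\begin{smallmatrix} A & 0 \\ A & 1\end{smallmatrix}\right]$ obtained by stacking two \emph{identical} copies of $A$ does not in general have strength $2u+1$. For a $(2u+1)$-subset of the old columns the restricted array is just two copies of $A$ restricted to those columns, and strength $2u$ of $A$ says nothing about the balance of $(2u+1)$-tuples; your sentence ``clear from strength $2u$ after restricting to either half'' is exactly the false step. Concretely, with $u=1$, $k=3$ and $A$ the even-weight $\mathit{OA}(4,3,2,2)$ with rows $000,011,101,110$, the triple $000$ occurs twice on the three old columns of your doubled array while $001$ occurs zero times. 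The correct construction stacks $[A \mid 0]$ on $[\bar A \mid 1]$, where $\bar A$ is the bitwise complement of $A$. Subsets containing the new column are then handled exactly as you do, but for a $(2u+1)$-subset $T$ of old columns one must show that $N_A(v)+N_A(\bar v)$ is independent of the $(2u+1)$-tuple $v$; by the character-sum characterization (Lemma \ref{lm:alphas}) all Fourier coefficients of $A|_T$ of weights $1,\dots,2u$ vanish, and the remaining top coefficient cancels between $v$ and $\bar v$ precisely because $|T|=2u+1$ is odd. This is where the evenness of the strength enters --- the identity \eqref{eq:2F} genuinely fails for odd strength (e.g.\ $2F(3,2,1)=4<8=F(4,2,2)$) --- and your argument never uses it. The repair carries over to \eqref{eq:double} unchanged: $[A \mid 0]$ over $[\bar A \mid 1]$ is simple whenever $A$ is, by the same two-case comparison of rows that you give.
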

\begin{proof}
\eqref{eq:FeqF*} and \eqref{eq:Fmonotone} are trivial. \cite[Theorem 2.24]{MR1693498} and \cite[Corollary 2.25]{MR1693498} imply \eqref{eq:2F}. \eqref{eq:double} holds by \cite[Proposition 2.6]{MR3784600}.
\end{proof}

\begin{remark}
Equation \eqref{eq:double} implies that it suffices to deal with orthogonal arrays of even strength $t=2u$ when studying the Carlet-Guilley problem and Problem \ref{prob:new}. This also shows that in the case of binary orthohonal arrays ($s=2$), one can use Theorem \ref{thm:main} to investigate the simplicity of arrays of odd strength. 
\end{remark}
\begin{remark}
For all integer $m$, duals of certain double-error-correcting BCH codes provide arrays $\mathit{OA}(2^{2m+1},2^m+1,2,5)$, and $\mathit{OA}(2^{2m},2^m,2,4)$ by \eqref{eq:double}. (See \cite[page 103]{MR1693498}.) If $k$ is an integer with $2^{m-1}<k\leq 2^m$, then
\[F(k,2,4)\leq F(2^m,2,4)\leq 2^{2m}<4k^2.\]
By Rao's Bound, $F(k,2,4)\geq (k^2+k+2)/2$. This shows that asymptotically, $F(k,2,4)$ and $F(k,2,5)$ are quadratic functions of $k$. 
\end{remark}

For tuples $u,v \in \{0,\ldots,s-1\}^k$, $w_H(u)$ denotes the Hamming weight, and 
\[uv^T=\sum_{i=1}^k u_iv_i\]
denotes the usual inner product (sometimes also denoted by $u\cdot v$ or by $\langle u,v\rangle$). For a matrix $H$ with complex entries, $H^*$ is the conjugate transpose of $H$. In particular, for complex (row) vectors $u,v \in \mathbb{C}^n$, 
\[uv^*=\sum_{i=1}^n u_i\bar{v}_i.\]
The $2$-norm of $u\in \mathbb{C}^n$ is
\[\|u\|=\sqrt{uu^*}.\]

Fix a primitive $s$-th root of unity $\zeta$. Let $A$ denote an $N\times k$ array with entries from $\{0,\ldots,s-1\}$. The $i$-th row of $A$ is denoted by $a_i$. For $1\leq i \leq N$ and $v\in \{0,\ldots,s-1\}^k$, we write:
\begin{equation}
\alpha_{i,v}=\zeta^{a_iv^T}.
\end{equation}
Clearly, for the zero vector $v=0$, we have $\alpha_{i,0}=1$. For any $v,v'$, we have
\[\alpha_{i,v}\alpha_{i,v'}=(\zeta^{a_iv^T}) (\zeta^{a_i(v')^T}) =\zeta^{a_i(v+v')^T}=\alpha_{i,v+v'},\]
and
\[\bar\alpha_{i,v}=\zeta^{-a_iv^T}=\zeta^{a_i(-v)^T}=\alpha_{i,-v}.\]

\begin{lemma} \label{lm:alphas}
The following statements are equivalent:
\begin{enumerate}[(i)]
\item The array $A$ is an $\mathit{OA}(N,k,s,t)$.
\item $\sum_{i=1}^N \alpha_{i,v}=0$ for any $v\in \{0,\ldots,s-1\}^k$ with $1\leq w_H(v)\leq t$. 
\item $\sum_{i=1}^N \alpha_{i,v}\bar{\alpha}_{i,v'}=0$ for any $v,v'\in \{0,\ldots,s-1\}^k$ with $w_H(v)+w_H(v')\leq t$. 
\end{enumerate}
\end{lemma}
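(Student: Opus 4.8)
The plan is to treat $(ii)\Leftrightarrow(iii)$ as a formal consequence of the multiplicative identities for the $\alpha_{i,v}$ recorded just before the lemma, and to obtain the substantive equivalence $(i)\Leftrightarrow(ii)$ from the discrete Fourier transform (group of characters) on $\{0,\dots,s-1\}^{T}$ for subsets $T$ of the columns.

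For $(ii)\Leftrightarrow(iii)$: one direction is immediate by taking $v'=0$, since $\alpha_{i,0}=1$, so $(iii)$ with $w_H(v)\le t$ forces $\sum_{i}\alpha_{i,v}=0$; restricting to $w_H(v)\ge1$ gives $(ii)$. For the other direction, I would use $\bar\alpha_{i,v'}=\alpha_{i,-v'}$ and $\alpha_{i,v}\alpha_{i,-v'}=\alpha_{i,v-v'}$ to rewrite $\alpha_{i,v}\bar\alpha_{i,v'}=\alpha_{i,v-v'}$; as $\supp(v-v')\subseteq\supp(v)\cup\supp(v')$, we have $w_H(v-v')\le w_H(v)+w_H(v')\le t$, and for $v\ne v'$ also $w_H(v-v')\ge1$, so $(ii)$ applied to $v-v'$ gives $\sum_{i}\alpha_{i,v}\bar\alpha_{i,v'}=0$. (For $v=v'$ this sum equals $N\ne0$, so $(iii)$ is understood for $v\ne v'$.)

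For $(i)\Leftrightarrow(ii)$: fix $T\subseteq\{1,\dots,k\}$ with $|T|=r$, and for $x\in\{0,\dots,s-1\}^{T}$ let $n_{x}$ be the number of rows $a_{i}$ with restriction $a_{i}|_{T}=x$, so that $\sum_{x}n_{x}=N$. If $\supp(v)\subseteq T$ then $a_{i}v^{T}$ depends only on $a_{i}|_{T}$, whence
\[
\sum_{i=1}^{N}\alpha_{i,v}=\sum_{x\in\{0,\dots,s-1\}^{T}}n_{x}\,\zeta^{\,x\cdot(v|_{T})}.
\]
The key computation is the complete character sum $\sum_{x}\zeta^{\,x\cdot w}=\prod_{j\in T}\sum_{c=0}^{s-1}\zeta^{\,cw_{j}}$, which equals $s^{r}$ for $w=0$ and $0$ otherwise (each inner sum is a geometric series vanishing as soon as $\zeta^{w_{j}}\ne1$). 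Hence the balanced vector $n_{x}\equiv N/s^{r}$ has transform $w\mapsto\sum_{x}n_{x}\zeta^{\,x\cdot w}$ supported only at $w=0$; conversely, the character transform is invertible (its matrix $(\zeta^{\,x\cdot w})_{x,w}$ is $s^{r/2}$ times a unitary, by orthogonality of characters), so any $n$ with transform supported at $w=0$ is a multiple of the all-ones vector, and matching the value $N$ at $w=0$ forces $n_{x}\equiv N/s^{r}$. Therefore, for a fixed $T$ with $r\le t$, ``$\sum_{i}\alpha_{i,v}=0$ for every nonzero $v$ with $\supp(v)\subseteq T$'' is equivalent to ``the subarray of $A$ on the columns of $T$ contains each $r$-tuple exactly $N/s^{r}$ times''.

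Finally I would globalize: a vector $v$ satisfies $1\le w_H(v)\le t$ exactly when it is nonzero and supported in some $T$ with $|T|\le t$, so $(ii)$ is equivalent to the assertion that every subarray of $A$ on at most $t$ columns is balanced. Taking $|T|=t$ gives $(i)$; conversely an $\mathit{OA}(N,k,s,t)$ has every $t$-column subarray balanced, and deleting columns --- i.e. summing the counts $n_{(x,y)}$ over $y$ --- shows every $r$-column subarray with $r\le t$ is balanced too, which is $(ii)$. The only real obstacle is the character-sum evaluation together with the invertibility of the character transform; everything else is bookkeeping with supports, and the single point requiring care is that strength $t$ is defined through $t$-column subarrays, so one line is needed to pass to all subarrays of size $\le t$.
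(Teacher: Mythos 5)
Your proof is correct, and on the substantive part it takes a different route from the paper. For $(ii)\Leftrightarrow(iii)$ you argue exactly as the authors do: set $v'=0$ for one direction, and use $\alpha_{i,v}\bar\alpha_{i,v'}=\alpha_{i,v-v'}$ together with $w_H(v-v')\leq w_H(v)+w_H(v')\leq t$ for the other. For $(i)\Leftrightarrow(ii)$, however, the paper simply cites \cite[Theorem 3.30]{MR1693498}, whereas you prove it from scratch: restricting to a column set $T$, writing $\sum_i\alpha_{i,v}$ as the character transform of the count vector $(n_x)$, evaluating the complete character sum $\sum_x\zeta^{x\cdot w}$, and invoking invertibility of the transform to conclude that vanishing of all nontrivial Fourier coefficients is equivalent to $n_x\equiv N/s^{|T|}$. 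This is the standard proof of the cited theorem, so you lose nothing in correctness and gain self-containedness at the cost of length; your final step passing from $t$-column subarrays to all subarrays on at most $t$ columns (by marginalizing the counts) is the right bookkeeping and is needed because $(ii)$ quantifies over all weights $1\leq w_H(v)\leq t$ while strength is defined via exactly $t$ columns. One further point in your favour: your parenthetical remark that $(iii)$ must be read with $v\neq v'$ (since for $v=v'$ the sum equals $N$) identifies a genuine imprecision in the lemma as stated, which the paper's own proof silently skirts --- its argument $w_H(v-v')\geq 1$ only applies when $v\neq v'$, and indeed the lemma is only ever invoked for distinct columns of $H$.
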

\begin{proof}
The equivalence of (i) and (ii) is precisely \cite[Theorem 3.30]{MR1693498}. Setting $v'=0$, we obtain (ii) from (iii). For any $v,v'$, we have $\alpha_{i,v}\bar{\alpha}_{i,v'}=\alpha_{i,v-v'}$. As $w_H(v-v')\leq w_H(v)+w_H(v')\leq t$, (ii) implies (iii). 
\end{proof}

\begin{remark}
For binary arrays ($s=2$), Lemma \ref{lm:alphas}(ii) is the Xiao-Massey characterization of $k$-variable $t$-CI Boolean functions, see \cite{xiaomassey1988} or \cite[Theorem 2.2]{MR3784600}.
\end{remark}

\section{The proof of the main theorem} \label{sec:main}

The proof of \cite[Theorem 2.1]{MR1693498} is based on the introduction of two matrices $H$ and $Q$. We shall see that the same matrices can be used for proving our result. 
\begin{proof}[Proof of Theorem \ref{thm:main}]
Without loss of generality, we assume that the entries of $A$ are from $\{0,\ldots,s-1\}$. For any $0\leq j \leq u$, we define the $N\times \binom{k}{j}(s-1)^j$ matrix $H_j$ in the following way. The columns of $H_j$ are indexed with the tuples $v\in \{0,\ldots,s-1\}^k$ of Hamming weight $j$. For $1\leq i \leq N$ and tuple $v$ with $w_H(v)=j$, the entry $(i,v)$ of $H_j$ is $\alpha_{i,v}$. 
\\
The matrix:
\[H=[H_0 \, H_1 \, \cdots \, H_u]\]
has $N$ rows and 
\[M=\sum_{j=0}^u \binom{k}{j}(s-1)^j=M(k,s,2u)\]
columns. Any two columns of $H$ are orthogonal complex vectors by Lemma \ref{lm:alphas}(iii). Moreover, if column $h$ of $H$ is indexed by the tuple $v$, then
\[h^*h=\sum_{i=1}^N \bar{\alpha}_{i,v}{\alpha}_{i,v}=N.\]
This means that $H^* H = N I$, and the columns of $\frac{1}{\sqrt{N}} H$ form an orthonormal set of vectors in $\mathbb{C}^N$. This set can be extended into an orthonormal basis of $\mathbb{C}^N$. In other words, one can add columns to $\frac{1}{\sqrt{N}} H$ such that one obtains an $N\times N$ unitary matrix $Q$. Each row of $Q$ has the form $[u \, u']$, where $u$ is a vector of length $M$, with entries $\frac{\zeta^{a_iv^T}}{\sqrt{N}}$. In particular, 
\begin{align} \label{eq:ulength}
\|u\|=\sqrt{M/N}, \quad \|u'\|=\sqrt{1-M/N}. 
\end{align}
Let us assume that the rows $i_1,\ldots,i_\rho$ of $A$ are equal. Then, the rows $i_1,\ldots,i_\rho$ of $H$ are equal, and, the rows $i_1,\ldots,i_\rho$ of $Q$ have the form
\[[u \, u_{(r)}], \qquad r=1,\ldots,\rho.\]
The rows of $Q$ form an orthonormal basis, thus for all $1\leq r\neq s\leq \rho$,
\begin{align} \label{eq:uort}
0=uu^*+u_{(r)}u_{(s)}^*. 
\end{align}
Assume that $N<\rho M$. Then  \eqref{eq:ulength} and \eqref{eq:uort} imply
\begin{align} \label{eq:spherical}
u_{(r)}u_{(s)}^* < -\frac{1}{\rho} \qquad (r\neq s).
\end{align}
We have
\begin{align*}
0\ & \leq \left\|\sum_{r=1}^\rho u_{(r)}\right\|^2 \\
&=\sum_{s,r=1}^\rho u_{(r)}u_{(s)}^* \\
&=\rho \left(1-\frac{M}{N}\right) + \sum_{r\neq s} u_{(r)}u_{(s)}^* \\
&<1-\frac{\rho M}{N},
\end{align*}
using \eqref{eq:spherical} in the last step. The assumption $N<\rho M$ makes the right hand side negative, a contradiction. This proves (i). Part (ii) is a straightforward consequence of (i).

For the rest of the proof, $A$ denotes a non-simple $\mathit{OA}(k^2+k+2,k,2,4)$ with $k\geq 5$. By reordering the rows of $A$, and adding a fixed row to all rows modulo $2$, we may assume that the first two rows of $A$ are all $0$s. We use the notation $H_i$, $i=0,1,2$, $H$ and $Q$ from above. Recall that $H$ has $N$ rows and $N/2$ columns. As $\zeta=-1$, the entries of $H$ are $\pm 1$. The key observation is the following:
\begin{itemize}
\item[(*)] \textit{In rows $3,\ldots,N$, the number of $1$s is either $\ell_1$ or $\ell_2$, where}
\[\ell_{1,2}=\frac{k+1\pm\sqrt{k-1}}{2}.\]
\end{itemize}
Let us prove this. As the first two rows of $A$ are all-zeros, the first two rows of $Q$ have the form $[u \, u']$ and $[u \, u'']$, where
\[u=\left[\frac{1}{\sqrt{N}}\,\cdots\,\frac{1}{\sqrt{N}}\right].\]
Using the fact that $N=2 M(k,2,4)$, we show $u''=-u'$ in the same way as above. Let $[v\,v']$ be row $i$ of $Q$ with $i\geq 3$. This is orthogonal to the first two rows, hence,
\begin{align*}
0&=uv^T+u'(v')^T, \\
0&=uv^T+u''(v')^T=uv^T-u'(v')^T.
\end{align*}
This implies $uv^T=0$. This means that among the entries of $v$, $\frac{1}{\sqrt{N}}$ and $-\frac{1}{\sqrt{N}}$ occur equally often. In terms of $H$, this means that in this row, $1$ occurs $N/4$ times. 
\\
Let $\ell$ denote the number of $1$s in row $i$ of $A$. $H_0$ has one column, which consists of all $1$s. In row $i$ of $H_1$, the number of $1$s is $k-\ell$. In row $i$ of $H_2$, the number of $1$s is
\[\binom{\ell}{2}+\binom{k-\ell}{2}.\]
Hence, for the number of $1$s in row $i$ of $H$, we have
\[1+k-\ell+\binom{\ell}{2}+\binom{k-\ell}{2}=\frac{k^2+k+2}{4}.\]
Hence, we have $\ell^2-(k+1)\ell+(k^2+k+2)/4=0$, which implies (*). 

Immediate consequences are that $\kappa=\sqrt{k-1}$ is an integer, $N=k^2+k+2$ can be written as $N=\kappa^4+3\kappa^2+4$, and $\ell_{1,2}=(\kappa^2\pm\kappa+2)/2$.

Let us construct the array $A'$ by selecting all rows of $A$ that start with three zeros. We get
\[A'=\begin{bmatrix}
0&0&0&0&\cdots&0\\
0&0&0&0&\cdots&0\\
0&0&0\\
0&0&0&&B\\
0&0&0
\end{bmatrix}\]
where $B$ is a subarray with $N/8-2$ rows and $k-3$ columns. Since $A$ has strength $4$, then according to Lemma \ref{lm:alphas}, columns $4$ to $k$ of $A'$ have a number of $1$s equal to their number of $0$s, that equals then $N/16$. Let $a$ denote the number of rows of weight $\ell_1$ in $B$. The total number of $1$s in $B$ is
\begin{equation}
a\ell_1+(N/8-2-a)\ell_2=N/16\cdot (k-3).
\end{equation}
We reorder to get:
\begin{equation}
a(\ell_1-\ell_2)=N(k-3)/16-(N-16)\ell_2/8.
\end{equation}
Now, $\ell_1-\ell_2=\kappa=\sqrt{k-1}$. Also, the right hand side can be expanded into a polynomial of $\kappa$. This yields:
\begin{eqnarray*}
16\, a\, \kappa &=&(\kappa^4+3\kappa^2+4)(\kappa^2-2)- (\kappa^4+3\kappa^2-12)(\kappa^2-\kappa+2)\\&=& \kappa^5-4\kappa^4+3\kappa^3+4\kappa^2-12\kappa+16.
\end{eqnarray*}
We obtain that $16\equiv 0\pmod{\kappa}$, that is $\kappa$ divides $16$, and since by assumption, we have $k\geq 5$, that is, $\kappa\geq 2$, then we have $\kappa\in\{2,4,8,16\}$. \\If $\kappa\in\{4,8,16\}$, then $-12\kappa+16\equiv 0\pmod{64}$, that is, $3\kappa \equiv4\pmod{16}$. This implies $\kappa \equiv 12 \pmod{16}$ (since the inverse of 3 modulo 16 equals 11), a contradiction.
\\
Let us then consider the case $\kappa=2$. Then $k=5$, $N=32$, $\ell_1=4$ and $\ell_2=2$. Since $A$ has $30$ non-zero rows, and $\binom{5}{2}+\binom{5}{4}=15$, each nonzero row has multiplicity $2$. In other words, $A$ is twice an $\mathit{OA}(16,5,2,4)$. This finishes the proof of (iii).  
\end{proof}

\section{Simple arrays of strength 2 and 4} \label{sec:strength2_4}

In the special case of orthogonal arrays of strength $2$, we solve Problem \ref{prob:new}, and this allows us to give an affirmative answer to Problem \ref{prob:CG}. 

\begin{proposition} \label{pr:F22}
For $k\geq 2$, we have $F^*(k,2,2)=F(k,2,2)$. In particular, the sequence $F^*(k,2,2)$ is non-decreasing.
\end{proposition}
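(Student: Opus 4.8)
The plan is to deduce Proposition \ref{pr:F22} from Corollary \ref{cor:1} together with the equivalence between strength-$2$ binary orthogonal arrays meeting Rao's bound and Hadamard matrices. By Corollary \ref{cor:1} with $s=2$, $t=2$, it suffices to show that $F(k,2,2) < 2\,M(k,2,2) = 2(k+1)$. So the whole proposition reduces to producing, for every $k\geq 2$, an $\mathit{OA}(N,k,2,2)$ with $N \leq 2k+1$, i.e.\ $N < 2k+2$.

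First I would recall the classical fact, mentioned already in the introduction, that an $\mathit{OA}(k+1,k,2,2)$ exists precisely when there is a Hadamard matrix of order $k+1$, and more generally that a Hadamard matrix of order $n$ yields an $\mathit{OA}(n, n-1, 2, 2)$ (delete the constant column and relabel $\pm 1 \mapsto 0,1$). Since deleting columns preserves strength (Equation \eqref{eq:Fmonotone}), a Hadamard matrix of order $n \geq k+1$ gives $F(k,2,2) \leq n$. The key quantitative input is then a bound on the smallest Hadamard order exceeding $k+1$: I would invoke the standard product construction — Hadamard matrices of all orders $1,2$ and $4$ exist, and the Kronecker product of Hadamard matrices of orders $m$ and $n$ has order $mn$ — so there is a Hadamard matrix of order $2^r$ for every $r$, hence one of order at most $2(k+1)$ once $k+1 > 1$. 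More carefully: for $k\geq 2$ pick the least power of two $n = 2^r$ with $n \geq k+1$; then $n \leq 2k < 2k+2$, which is exactly the strict inequality $F(k,2,2) < 2M(k,2,2)$ that Corollary \ref{cor:1} requires.

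This yields $F^*(k,2,2) = F(k,2,2)$ for all $k \geq 2$. For the "in particular" clause, monotonicity of $F^*$ follows from the chain displayed in the introduction: since $F(k,2,2)=F^*(k,2,2)$ we have $F^*(k,2,2) \leq F(k+1,2,2) \leq F^*(k+1,2,2)$, where the first inequality is \eqref{eq:Fmonotone} and the second is \eqref{eq:FeqF*}. I would state this chain explicitly to close the argument.

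I do not anticipate a serious obstacle here: the only nontrivial ingredient is the gap bound for Hadamard orders, and the doubling construction (via powers of $2$, or via $4 \mid n$ for small cases) makes the gap at most a factor of $2$, which is comfortably enough for the strict inequality. One small point to verify carefully is the boundary behavior at $k=2,3$ (where $k+1 = 3,4$ and the relevant Hadamard order is $4$, so $N=4 < 2k+2 = 6, 8$), but these are immediate. If one prefers to avoid even the mild appeal to powers of two, an alternative is to note that an $\mathit{OA}(2k, k, 2, 2)$ can be built directly by juxtaposing suitable $\mathit{OA}$'s of fewer columns, but the Hadamard route is cleanest and is already in the spirit of the paper's Table \ref{tab:minrows} discussion.
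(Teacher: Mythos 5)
Your proposal is correct and follows essentially the same route as the paper: both obtain $F(k,2,2)\leq 2^h\leq 2k<2M(k,2,2)$ from the Kronecker-power Hadamard matrices (the paper states $F(2^h-1,2,2)=2^h$ directly from the Hadamard Fourier transform matrix) and then apply Corollary \ref{cor:1}. The monotonicity chain you cite is likewise the one the paper relies on.
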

\begin{proof}
For any positive integer $h$, a classical Hadamard matrix $H_{2^h}$ is the matrix of the Hadamard Fourier transform, equal to the Kronecker product $H_2\otimes \cdots \otimes H_2$ of the matrix:
\[H_2=\begin{bmatrix} 1&1\\ 1&-1\end{bmatrix}\]with itself.
This implies 
\begin{align} \label{eq:Hadaconstr}
F(2^h-1,2,2)=2^h.
\end{align}
Given a positive integer $k$, let $h$ be the positive integer such that $2^{h-1}\leq k \leq 2^h-1$, then %
 \eqref{eq:Fmonotone} and \eqref{eq:Hadaconstr} imply:
\[F(k,2,2) \leq F(2^h-1,2,2) = 2^h \leq 2k. \]
As $M(k,2,2)=k+1$, we can apply Corollary \ref{cor:1} to obtain $F(k,2,2)=F^*(k,2,2)$. 
\end{proof}

The solution of Problem \ref{prob:new} has an implication to a recent conjecture by the first author and Chen. In \cite[Conjecture 2.8]{MR3784600}, the authors asked if 
\begin{align}\label{eq:conjCC} \tag{CC}
F^*(k,2,3)=8\left\lceil\frac{k}{4}\right\rceil.
\end{align}
Wang proved in \cite[Theorem 3.7]{MR4001794} that \eqref{eq:conjCC} and the Hadamard conjecture are equivalent. However, Wang's proof is incomplete, since no explanation is given for $w_{4\lambda+\varepsilon,3}\leq w_{4\lambda+4,3}$, where $1\leq \varepsilon \leq 3$ (and $w_{n,t}=F^*(n,2,t)$). In fact, this follows from Proposition \ref{pr:F22}. In order to be self-contained, we present a complete proof of the two conjectures. 
\begin{proposition}
The Hadamard conjecture is equivalent with Conjecture \ref{eq:conjCC}.
\end{proposition}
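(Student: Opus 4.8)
The plan is to prove both implications of the equivalence, using Proposition~\ref{pr:F22} to supply the monotonicity ingredient that was missing in Wang's argument, together with the double-and-delete identities \eqref{eq:2F} and \eqref{eq:double}.

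First I would recall the basic dictionary: by \eqref{eq:double}, $F^*(k+1,2,3)=2F^*(k,2,2)$, so Conjecture~\eqref{eq:conjCC} for all $k$ is equivalent to the statement $F^*(k,2,2)=4\lceil (k+1)/4\rceil$ for all $k\geq 1$ (after re-indexing), and by Proposition~\ref{pr:F22} this is the same as $F(k,2,2)=4\lceil (k+1)/4\rceil$. Since a binary $\mathit{OA}(N,k,2,2)$ with $N=k+1$ rows is equivalent to a Hadamard matrix of order $k+1$, the Hadamard conjecture says precisely that $F(k,2,2)=k+1$ whenever $k\equiv 3\pmod 4$. So the real content to extract is: \emph{the Hadamard conjecture is equivalent to the assertion that $F(k,2,2)$ equals the smallest multiple of $4$ that is at least $k+1$.}

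Next I would run the two directions. For ``Hadamard $\Rightarrow$ \eqref{eq:conjCC}'': given any $k$, let $m$ be the least multiple of $4$ with $m\geq k+1$; the Hadamard conjecture gives an $\mathit{OA}(m,m-1,2,2)$, and deleting columns (using \eqref{eq:Fmonotone}) produces an $\mathit{OA}(m,k,2,2)$, so $F(k,2,2)\leq m$. For the matching lower bound, note $F(k,2,2)$ must be a multiple of $4$ for $k\geq 3$: indeed by Rao's bound $F(k,2,2)\geq k+1\geq 4$, and the existence of an $\mathit{OA}(N,k,2,2)$ with $k\geq 3$ forces $4\mid N$ by the standard strength-$2$ divisibility argument (an $\mathit{OA}(N,3,2,2)$ has $N=4\lambda$), hence $F(k,2,2)\geq m$. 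Combined with Proposition~\ref{pr:F22} and \eqref{eq:double} this yields \eqref{eq:conjCC}. For the reverse direction, assume \eqref{eq:conjCC}; then $F^*(k,2,2)=4\lceil(k+1)/4\rceil$, so for $k\equiv 3\pmod 4$ we get $F^*(k,2,2)=k+1$, i.e.\ a simple (hence any) $\mathit{OA}(k+1,k,2,2)$ exists, which is a Hadamard matrix of order $k+1$; as $k+1$ ranges over all multiples of $4$, this is exactly the Hadamard conjecture.

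The main obstacle — and the whole point of invoking Proposition~\ref{pr:F22} — is the passage between $F^*$ and $F$: \emph{a priori} knowing the minimum number of rows of a \emph{simple} array tells us nothing directly about general arrays, and conversely Wang's reduction needs $F^*(4\lambda+\varepsilon,2,3)\leq F^*(4\lambda+4,2,3)$ for $1\leq\varepsilon\leq 3$, which is the monotonicity of $F^*(\cdot,2,3)$ on a block of four consecutive values. Via \eqref{eq:double} this monotonicity is equivalent to $F^*(k,2,2)\leq F^*(k+1,2,2)$, and that is not obvious for simple arrays (deleting a column of a simple array need not give a simple array); Proposition~\ref{pr:F22} circumvents this by identifying $F^*(k,2,2)$ with $F(k,2,2)$, for which monotonicity is \eqref{eq:Fmonotone}. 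I expect the remaining bookkeeping — checking small cases $k\leq 2$ by hand, and tracking the ceiling function through the index shift in \eqref{eq:double} — to be routine.
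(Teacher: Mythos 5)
Your proposal is correct and follows essentially the same route as the paper: translate \eqref{eq:conjCC} into a statement about $F(\cdot,2,2)$ via \eqref{eq:double} and Proposition~\ref{pr:F22}, identify the Hadamard conjecture with $F(4\lambda-1,2,2)=4\lambda$, and close the gap for $k\not\equiv 0\pmod 4$ using Rao's bound, the divisibility $4\mid F(k,2,2)$, and the monotonicity \eqref{eq:Fmonotone}. Your diagnosis of why Proposition~\ref{pr:F22} is the essential ingredient (monotonicity of $F^*$ is not free, unlike that of $F$) matches the paper's motivation exactly.
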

\begin{proof}
Recall that \cite[Theorem 7.5]{MR1693498} states that orthogonal arrays $\mathit{OA}(4\lambda , 4\lambda -1, 2, 2)$ and/or $\mathit{OA}(8\lambda , 4\lambda , 2, 3)$ exist (and then $F(4\lambda-1,2,2) \leq 4\lambda$) if and only if there exists a Hadamard matrix of order $4\lambda $. According to Rao's Bound, we have $F(4\lambda-1,2,2) \geq 4\lambda$. The Hadamard conjecture is then equivalent with:
\begin{align} \label{eq:conjHada} \tag{Ha}
F(4\lambda-1,2,2)= 4\lambda
\end{align}
for all positive integer $\lambda$. By \eqref{eq:double} and Proposition \ref{pr:F22}, \eqref{eq:conjCC} is equivalent with
\begin{align}\label{eq:conjCC'} \tag{CC'}
F(k-1,2,2)=4\left\lceil\frac{k}{4}\right\rceil.
\end{align}
If $k=4\lambda$, then \eqref{eq:conjHada} and \eqref{eq:conjCC'} are clearly equivalent. It remains to show that \eqref{eq:conjHada} implies \eqref{eq:conjCC'} for any integer $k=4\lambda+\varepsilon$ with $1\leq \varepsilon \leq 3$. 
Rao's Bound gives 
\[4\lambda <k \leq F(k-1,2,2),\]
which implies 
\begin{equation} \label{eq:4lam4}
4\lambda+4\leq F(k-1,2,2) \leq F(4\lambda+3,2,2),
\end{equation}
since $4$ divides $F(k-1,2,2)$, and $F(k,s,t)$ is non-decreasing in $k$. By \eqref{eq:conjHada} and \eqref{eq:4lam4},
\[F(k-1,2,2)=4\lambda+4=4\left\lceil\frac{k}{4}\right\rceil,\]
and the Carlet-Chen conjecture follows. 
\end{proof}

We finish this section by a partial answer to Problem \ref{prob:new} for orthogonal arrays of strength $4$.

\begin{proposition} \label{pr:apply4}
Let $k,m$ be integers, $m\geq 4$ even, with 
\[2^{m-1/2} \leq k \leq 2^m-1.\]
Then $F^*(k,2,4)=F(k,2,4)$.  
\end{proposition}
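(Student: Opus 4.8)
The plan is to reduce the claim to Corollary~\ref{cor:1}. Since $t=4$ is even and
\[2\,M(k,2,4)=2\sum_{j=0}^{2}\binom{k}{j}=k^2+k+2,\]
it suffices to produce, under the hypotheses of the proposition, an $\mathit{OA}(N,k,2,4)$ with $N<k^2+k+2$; equivalently, to prove the upper bound $F(k,2,4)<k^2+k+2$. Corollary~\ref{cor:1} then immediately gives $F^*(k,2,4)=F(k,2,4)$, which is the assertion.

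For the upper bound I would invoke the Kerdock code. For every even $m\ge 4$ the binary Kerdock code of length $2^m$ has $2^{2m}$ codewords, contains the all-zero word, and has dual distance $6$; by Lemma~\ref{lm:alphas} it is therefore an $\mathit{OA}(2^{2m},2^m,2,5)$ (see \cite{KERDOCK1972182}), so $F(2^m,2,5)\le 2^{2m}$. By \eqref{eq:2F} this yields $F(2^m-1,2,4)=\frac12 F(2^m,2,5)\le 2^{2m-1}$ (concretely: keep the rows of the Kerdock array that begin with $0$ and delete that column), and then the monotonicity~\eqref{eq:Fmonotone} gives, for every integer $k\le 2^m-1$,
\[F(k,2,4)\le F(2^m-1,2,4)\le 2^{2m-1}.\]

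It remains to compare this with $k^2+k+2$. As $2^{m-1/2}=2^{m-1}\sqrt2$ is irrational, the hypothesis $k\ge 2^{m-1/2}$ forces $k>2^{m-1/2}$ for the integer $k$, hence $k^2>2^{2m-1}$, and therefore
\[F(k,2,4)\le 2^{2m-1}<k^2<k^2+k+2=2\,M(k,2,4).\]
By Corollary~\ref{cor:1} we conclude $F^*(k,2,4)=F(k,2,4)$. The only non-routine ingredient is the classical fact that the Kerdock code of length $2^m$ is an orthogonal array of strength $5$ (equivalently, that its dual distance is $6$); this follows from the $\mathbb Z_4$-linearity of the Kerdock code and its formal duality with the distance-$6$ Preparata code, or from a direct MacWilliams-transform computation of the Kerdock distance distribution, and it is this construction that forces the hypothesis ``$m\ge 4$ even''. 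I would also stress why the lower bound on $k$ is genuinely needed: the BCH-type bound $F(2^m,2,4)\le 2^{2m}$ recalled in Section~2 is too weak here, since $2^{2m}\ge k^2+k+2$ whenever $k\le 2^m-1$, so the factor-$2$ improvement supplied by the Kerdock code is essential — and even that improvement ceases to beat $k^2+k+2$ once $k$ is permitted below $2^{m-1/2}$.
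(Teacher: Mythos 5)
Your proposal is correct and follows essentially the same route as the paper: the Kerdock code yields $F(2^m-1,2,4)\le 2^{2m-1}$, monotonicity extends this to all $k\le 2^m-1$, and the hypothesis $k\ge 2^{m-1/2}$ gives $2^{2m-1}<k^2+k+2=2\,M(k,2,4)$, so Corollary~\ref{cor:1} applies. The only cosmetic difference is in the final comparison, where you use the irrationality of $2^{m-1/2}$ to get $k^2>2^{2m-1}$ while the paper writes $2^{2m-1}<2^{2m-1}+2^{m-1/2}+2\le k^2+k+2$; both are valid.
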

\begin{proof}
For any even integer $m\geq 4$, Kerdock \cite{KERDOCK1972182} constructed a binary, non-linear code of length $2^{m}$, cardinality $4^m$, minimum distance $2^{m-1}-2^{(m-2)/2}$ and dual distance 6. This code can be interpreted as a simple $\mathit{OA}(4^m,2^m,2,5)$, since we know that an unrestricted code has dual distance $d^\perp$ if and only if its indicator is a correlation immune function of order $d^\perp-1$ (and not of order $d^\perp$), that is, if and only if the array obtained by writing all codewords as rows is a simple OA of strength $d^\perp-1$. In the usual way, we take the rows that start with a $0$, and delete the starting $0$ to obtain a simple $\mathit{OA}(2^{2m-1},2^m-1,2,4)$. This shows
\[F(2^m-1,2,4)\leq F^*(2^m-1,2,4)\leq 2^{2m-1} \qquad \text{ for $m\geq 4$ even.}\]
Assume $2^{m-1/2} \leq k \leq 2^m-1$. Then
\begin{align*}
F(k,2,4) &\leq F(2^m-1,2,4) \\
&\leq 2^{2m-1} \\
&<2^{2m-1}+2^{m-1/2}+2\\
&\leq k^2+k+2=2 M(k,2,4).
\end{align*}
Corollary \ref{cor:1} implies  $F^*(k,2,4)=F(k,2,4)$. 
\end{proof}

We can interpret the above result in such a way that the set of integers $k$ confirming the Carlet-Guilley problem has a positive density. For any integer $t$, we define the set $\mathcal{G}(t)$ of integers $k$ such that $F^*(k,2,t)=F(k,2,t)$. Let $4\leq \mu$ be an even integer. For $4\leq m \leq \mu$ even, the set $\mathcal{G}(4)_{< 2^\mu}$ contains disjoint intervals of length
\[2^m-1-2^{m-1/2}=2^m\left(1-\frac{1}{\sqrt{2}}\right)-1.\]
Summing this up, we obtain
\begin{align*}
|\mathcal{G}(4)_{< 2^\mu}| & \geq \sum_{\text{$m\geq 4$ even}}^{\mu} 2^m\left(1-\frac{1}{\sqrt{2}}\right) -1 \\
&=\sum_{\ell=0}^{\mu/2-2} 2^{2\ell+4}\left(1-\frac{1}{\sqrt{2}}\right)-1\\
&=\frac{\mu}{2}-1 + 2^4\left(1-\frac{1}{\sqrt{2}}\right)\frac{4^{\mu/2-1}-1}{3}.
\end{align*}
Hence,
\[\lim_{\mu\to\infty}\frac{|\mathcal{G}(4)_{< 2^\mu}|}{2^\mu} \geq %
\frac{4-2\sqrt{2}}{3} \approx 0.39. \]

\begin{remark}It is not known (but not excluded either) if the Kerdock code is optimal as an unrestricted code of dual distance 6, that is, if $F^*(2^m,2,5)=4^m$ and $F^*(2^m-1,2,4)=2^{2m-1}$, for $m\geq 4$ even. It is more or less conjectured, but not yet proved explicitly, that the Preparata code of length $2^m$, with $m\geq 4$ even, is optimal as a code with size $2^{2^m-2m}$ and dual distance $2^{m-1}-2^{m/2-1}$, that is, $F^*(2^m,2,2^{m-1}-2^{m/2-1}-1)= 2^{2^m-2m}$.\end{remark}
\section{Applications and further constructions} \label{sec:applications}

\begin{proposition} \label{pr:table}
The missing entries of Table \ref{tab:minrows} are the following:
\begin{align}
\begin{split}
F^*(k,2,4)&=128 \qquad \text{ for $11\leq k \leq 15$,} \\ 
F^*(k,2,5)&=256 \qquad \text{ for $11\leq k \leq 16$,}
\end{split} \tag{A} \\
\begin{split}
F^*(12,2,6)&=768, \\
F^*(13,2,7)&=1\,536,
\end{split} \tag{B} \\
F^*(13,2,6)&=1\,024. \tag{C}
\end{align}
For all these parameters $k,t$, we have $F^*(k,2,t)=F(k,2,t)$.
\end{proposition}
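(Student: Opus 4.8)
The plan is to treat the three cases (A), (B), (C) in the same way: for each claimed value $v=F^*(k,2,t)$, produce an upper bound $F^*(k,2,t)\le v$ from an explicit construction, a lower bound $F(k,2,t)\ge v$ valid for \emph{all} (not necessarily simple) orthogonal arrays, and then conclude $F(k,2,t)=F^*(k,2,t)=v$ using $F(k,2,t)\le F^*(k,2,t)$ from \eqref{eq:FeqF*}. The lower bounds will come from Rao's bound together with the divisibility $2^t\mid N$, from the Delsarte linear programming bound, or from monotonicity \eqref{eq:Fmonotone} applied to values already listed in Table \ref{tab:minrows}; the upper bounds from known codes (Kerdock, Reed--Muller/BCH-type), from Krotov's array \cite{Krotov2020} or an independent construction, and from coordinate projection, which preserves simplicity. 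Odd strengths $t=5,7$ are reduced to even strengths $t=4,6$ by \eqref{eq:2F} and \eqref{eq:double}, which give $F(k,2,2u+1)=2F(k-1,2,2u)$ and $F^*(k,2,2u+1)=2F^*(k-1,2,2u)$.

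For (A): the length-$16$ Kerdock code is a simple $\mathit{OA}(256,16,2,5)$ (see \cite{KERDOCK1972182} and the proof of Proposition \ref{pr:apply4}); fixing one coordinate and projecting gives a simple $\mathit{OA}(128,15,2,4)$, hence $F^*(15,2,4)\le 128$, and deleting columns gives (generally non-simple, but this is enough here) $\mathit{OA}(128,k,2,4)$ for $11\le k\le 15$, hence $F(k,2,4)\le 128$. For the lower bound: at $k=15$, Rao's bound gives $N\ge M(15,2,4)=121$ and $16\mid N$ forces $N\ge 128$; for $11\le k\le 14$ the bound $F(k,2,4)\ge 128$ comes from the Delsarte LP bound, which also gives $F(10,2,4)\ge 128$, hence $F(10,2,4)=128$ by Table \ref{tab:minrows}. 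Since $128<k^{2}+k+2=2\,M(k,2,4)$ exactly when $k\ge 11$, Corollary \ref{cor:1} upgrades this to $F^*(k,2,4)=F(k,2,4)=128$ for $11\le k\le 15$; this is precisely why the range starts at $k=11$. Finally $F^*(k,2,5)=2F^*(k-1,2,4)$ and $F(k,2,5)=2F(k-1,2,4)$ give $F^*(k,2,5)=F(k,2,5)=256$ for $11\le k\le 16$.

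For (B) and (C), the value $v$ exceeds $2\,M(k,2,t)$ (indeed $M(12,2,6)=299$ and $M(13,2,6)=378$), so Corollary \ref{cor:1} is not available and the constructions must be produced simple from the outset. For (B): from a simple $\mathit{OA}(1536,13,2,7)$ --- either Krotov's array \cite{Krotov2020}, or the independent algebraic construction of a weight-$1536$ $7$-CI function in $13$ variables that I would present --- project one coordinate to obtain a simple $\mathit{OA}(768,12,2,6)$, so $F^*(12,2,6)\le 768$; since the LP bound gives $F(12,2,6)\ge 768$, we get $F^*(12,2,6)=F(12,2,6)=768$, and then $F^*(13,2,7)=2\cdot 768=1536=F(13,2,7)$ by \eqref{eq:double} and \eqref{eq:2F}. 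For (C): a simple $\mathit{OA}(1024,13,2,6)$ is obtained as the (linear, hence simple) code dual to an optimal $[13,3,7]$ binary code, which exists because $7+4+2=13$ meets the Griesmer bound; so $F^*(13,2,6)\le 1024$, and combined with $F(13,2,6)\ge 1024$ (from \cite{MR4001794} / the LP bound) this gives $F^*(13,2,6)=F(13,2,6)=1024$.

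The real work is concentrated in two places. First, the lower bounds that go beyond Rao's bound: establishing $F(k,2,4)\ge 128$ for $11\le k\le 14$, $F(12,2,6)\ge 768$, and $F(13,2,6)\ge 1024$ relies on the linear programming bound (or the cited results), and I expect the strength-$6$ cases to be the most demanding. Second --- and this is the genuinely creative step --- the explicit independent construction of a simple $\mathit{OA}(1536,13,2,7)$ for (B). One further point to watch is that for (B) and (C) one cannot route through Corollary \ref{cor:1}, so the simplicity of every array used (preserved by coordinate projection, automatic for linear codes) has to be verified directly rather than inferred.
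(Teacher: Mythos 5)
Your overall architecture (explicit constructions for the upper bounds, lower bounds valid for general OAs, then Corollary \ref{cor:1} or direct simplicity) is the paper's, and your treatment of (C) essentially coincides with it. But there are two genuine gaps.

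First, in (A) the lower bound fails at its crucial point. You attribute $F(k,2,4)\geq 128$ for $10\leq k\leq 14$ to the Delsarte LP bound, but the plain LP bound does not deliver $128$ here: the paper colours the cell $F^*(10,2,4)=128$ as an SMT/\texttt{z3} computational result rather than a green ``LP'' cell, lists only ILP tables, exhaustive enumeration and specialized energy bounds as sources for $F(10,2,4)\geq 128$, and explicitly remarks that no computer-free proof of $F^*(10,2,4)=128$ is known. The step you are missing is the paper's actual argument, which is the main point of the proposition: suppose $F(10,2,4)<128$ and let $A$ be an $\mathit{OA}(n,10,2,4)$ with $n<128$; divisibility by $16$ forces $n\leq 112=2\,M(10,2,4)$, so by Theorem \ref{thm:main}(ii)--(iii) the array $A$ is simple (the exceptional non-simple case in (iii) requires $k=5$), whence $F^*(10,2,4)\leq 112$, contradicting the known table entry $F^*(10,2,4)=128$. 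Therefore $F(10,2,4)=128$, and \eqref{eq:Fmonotone} gives $F(k,2,4)\geq 128$ for all $k\geq 10$. Without this bootstrap from the simple-array value (or an actual verification that LP reaches $128$ for each $k\in\{11,\dots,14\}$, which you do not supply), (A) is unproved for $11\leq k\leq 14$; your $k=15$ computation via Rao plus divisibility is correct but does not cover the smaller $k$.

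Second, in (B) the entire content is the existence of a simple $\mathit{OA}(1\,536,13,2,7)$, equivalently a simple $\mathit{OA}(768,12,2,6)$, and you defer it (``the independent algebraic construction \dots that I would present''). The paper produces it by an ILP search with a prescribed automorphism of order $5$ and records its weight polynomial, cross-checking against Krotov's uniqueness result; citing Krotov alone would be legitimate, but a proof that merely promises a construction has not discharged the key step. A minor point in (C): meeting the Griesmer bound ($7+4+2=13$) is necessary, not sufficient, for the existence of a $[13,3,7]$ code; existence should be settled by the explicit generator matrix, as in the paper.
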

\begin{proof}
(A) For $u=4$ and $k\leq 15$, shortening the Kerdock code gives
\[F(k,2,4)\leq 128, \quad\text{ and } \quad F(k+1,2,5)\leq 256.\]
If $11\leq k \leq 15$, then Corollary \ref{cor:1} implies
\begin{equation} \label{eq:FkeqFk}
F^*(k,2,4)=F(k,2,4).
\end{equation}
Assume $F(10,2,4)<128$ and let $A$ denote an $\mathit{OA}(n,10,2,4)$ with $n<128$. Then $n\leq 112$ and $A$ is simple by Theorem \ref{thm:main}. Hence, $F^*(10,2,4)\leq 112$, which contradicts to the entry
\begin{equation} \label{eq:F*10}
F^*(10,2,4)=128.
\end{equation}
of Table \ref{tab:minrows}. Hence, \eqref{eq:FkeqFk} holds for $k=10$, as well. As $F(k,s,t)$ is non-decreasing in $k$, we obtain (A).

(B) For $k=12, t=6$, Delsarte's LP Bound has value $768$. We modified the ILP method of Bulutoglu and Margot \cite{margot} to construct an array $B=\mathit{OA}(768,12,2,6)$ that has an automorphism 
\[(1,2,3,4,5)(6,7,8,9,10)\]
of order $5$. This gives rise to an array $B'=\mathit{OA}(1\,536,13,2,7)$ with weight polynomial
\[(x + 1)^{5} \cdot (x^{8} - 5 x^{7} + 28 x^{6} - 35 x^{5} + 70 x^{4} - 35 x^{3} + 28 x^{2} - 5 x + 1).\]
As shown in \cite{Krotov2020}, $B'$ is unique and it can be constructed from an equitable partition of the 13-cube. 

(C) For $k=13, t=6$, Delsarte's LP Bound has value $1\,024$. The generator matrix
\[
G=\left[\begin{array}{rrrrrrrrrrrrr}
1 & 1 & 1 & 1 & 1 & 1 & 1 & 1 & 0 & 0 & 0 & 0 & 0 \\
1 & 1 & 1 & 1 & 0 & 0 & 0 & 0 & 1 & 1 & 1 & 1 & 0 \\
1 & 1 & 0 & 0 & 1 & 1 & 0 & 0 & 1 & 1 & 0 & 0 & 1
\end{array}\right]
\]
defines a binary linear $[13,3,7]$-code $C$. The dual of $C$ is a linear $\mathit{OA}(1\,024,13,2,6)$. Notice that this construction is given in a more general context in \cite{MR4001794}. 
\end{proof}
\begin{remark}
\begin{enumerate}
\item $F(10,2,4)\geq 128$ can be deduced from \cite[Table 1]{margot}, from \cite[Table III]{eendebak}, from \cite[Appendix A]{MR4001794}, or from \cite[Theorems 18 and 20]{Boyvalenkov2017}.
\item The values in (A) are given in \cite{MR4286929}, with a more computer-based proof.
\item The true value of $F(11,2,4)$ has been asked in the Fifth International Students’ Olympiad in Cryptography NSUCRYPTO’2018 \cite[Problem ``Orthogonal arrays'']{nsu18paper}. 
\item The true value of $F^*(12,2,6)$ has been asked in the Fourth International Students’ Olympiad in Cryptography NSUCRYPTO’2017 \cite[Problem ``Masking'']{nsu17paper}. 
\item It is quite surprising that \eqref{eq:F*10} has no computer-free proof. 
\end{enumerate}
\end{remark}

\noindent \textbf{Data Availability Statement.} 
Data sharing not applicable to this article as no datasets were generated or analysed during the current study

\medspace

\noindent {\bf \large Acknowledgements}. We thank Denis Krotov, Patrick Sol\'e and Victor Zinoviev for useful information on the optimality of the Kerdock and Preparata codes. We are grateful to the anonymous referees for their insightful comments and advices which led to substantial improvements.

\printbibliography

\end{document}